\documentclass[12pt,reqno]{amsart}

\usepackage[T1]{fontenc}
\usepackage[utf8]{inputenc}
\usepackage{amsmath,amsfonts,amssymb,amsthm,mathtools,mathrsfs, booktabs}
\usepackage{tikz}
\usepackage{hyperref}
\hypersetup{hidelinks}
\usepackage{microtype}

\numberwithin{equation}{section}

\theoremstyle{plain}
\newtheorem{theorem}{Theorem}[section]
\newtheorem{lemma}[theorem]{Lemma}
\newtheorem{proposition}[theorem]{Proposition}

\theoremstyle{definition}
\newtheorem{definition}[theorem]{Definition}
\newtheorem{question}[theorem]{Question}
\newtheorem{example}[theorem]{Example}
\newtheorem{remark}[theorem]{Remark}

\newcommand{\Dhalf}{\mathscr{D}^{1/2}}

\title[Discovering The Half-Derivative from First Principles]{Experimental Mathematics: Discovering The Half-Derivative from First Principles}

\author{H\`ung Vi\d{\^e}t Chu}
\address{Department of Mathematics, Washington and Lee University, Lexington, VA 24450, USA}
\email{hchu@wlu.edu}

\author{Steven J. Miller}
\address{Department of Mathematics and Statistics, Williams College, Williamstown, MA 01267, USA}
\email{sjm1@williams.edu}

\author{Joshua M. Siktar}
\address{Department of Mathematics, Texas A\&M University, College Station, TX 77843, USA}
\email{jmsiktar@tamu.edu}

\subjclass[2020]{26A33 (primary); 97I40 (secondary)}
\keywords{fractional derivative, Gamma function, functional equation}

\begin{document}

\begin{abstract}
This expository paper grew out of two lectures in Miller's Math 409: The little Questions (Williams College, Fall 2026). We explore fractional derivatives by asking what a ``half-derivative'' might mean if applying it twice is to recover an ordinary derivative. Linearity and a natural guess for the half-derivative of a power function lead to a functional equation related to the classical factorial recurrence. Choosing the familiar Gamma-function extension gives the classical formula
\[
  \mathscr{D}^{1/2}(x^n)
  \ =\ \frac{\Gamma(n+1)}{\Gamma(n+\tfrac12)}x^{n-1/2}.
\]
We derive $\Gamma(n+1)=n!$, evaluate $\Gamma(1/2)=\sqrt{\pi}$ using the Gaussian integral, and verify the half-derivative formula with an elementary integral. 
Along the way, we explore how the Gamma function generalizes factorials and binomial coefficients beyond their classical domains, encounter the double factorial, see why the usual product rule fails, and examine the half-derivative of $e^x$. We also explain why the original requirements do not determine a unique operator. The treatment emphasizes motivation, the benefits of experimentation through accessible calculations in test cases, rather than the general theory of fractional calculus.
\end{abstract}

\maketitle
\tableofcontents

\section{Introduction}
\label{sec:intro}

In first-year calculus, we learn two familiar notations for the derivative of a function $f$:
\[
  f'(x) \qquad \text{and} \qquad \frac{df}{dx}.
\]
Both describe the instantaneous rate of change, but each has its own advantages. The prime notation naturally extends to higher derivatives, written as $f^{(k)}$, while Leibniz's notation, $\frac{d^k f}{dx^k}$, explicitly indicates both the variable of differentiation and the order of the derivative.

The latter notation raises a natural, if initially whimsical, question: what would it mean to take a \emph{half-derivative}? More generally, can we make sense of
\[
  \frac{d^\alpha f}{dx^\alpha}
\]
when $\alpha$ is not an integer? This question leads to the subject of \emph{fractional calculus}, in which the familiar operations of differentiation and integration are extended to non-integer orders.

This idea is far from new. Its history dates back to Leibniz's correspondence with l'H\^opital in 1695, in which the possibility of differentiation of non-integer order was first raised \cite{Leibniz1695,Ross1975}. Subsequent contributions by Abel, Euler, Liouville, Riemann, and others led to the development of several approaches to fractional differentiation \cite{Miller, Ross1975}. In particular, Liouville's early contributions are presented in \cite{Liouville1832,Liouville1832_02}.

Modern treatments include extensions of repeated integration, leading to the Riemann-Liouville fractional operators, as well as other formulations such as the Caputo derivative and Fourier-based approaches. For systematic treatments, we refer to Oldham and Spanier \cite{OldhamSpanier} for an introduction to fractional calculus and its applications; Samko, Kilbas, and Marichev \cite{Samko} for a comprehensive mathematical treatment of fractional integrals and derivatives; Miller and Ross \cite{Miller} for their historical survey and development of fractional calculus; and Podlubny \cite{Podlubny} for fractional differential equations and their applications.

The purpose of this paper is primarily expository. Rather than beginning with an established definition of fractional differentiation, we take a more exploratory approach, guided by a simple question.
\begin{question}
What properties must an operator $\Dhalf$ satisfy if it is to deserve the name \emph{half-derivative}?
\end{question}
Our goal is not merely to introduce fractional derivatives and the Gamma function, but also to illustrate the process of mathematical discovery. In particular, we hope to encourage students to ask fruitful questions, identify promising starting points, experiment with simple examples, and use their observations as guidance toward deeper mathematical ideas. As emphasized by Miller and Wu \cite{MillerWu}, understanding how to discover a mathematical result can be just as valuable as understanding its proof. We hope that our approach will help students develop the curiosity and intuition needed to explore unfamiliar mathematical problems independently.

Our starting point is the need that applying $\Dhalf$ twice should recover the ordinary derivative, at least for simple functions such as powers of $x$. Together with linearity and a suitable guess for the half-derivative of a power function, this leads to a functional equation for the coefficients. By expressing these coefficients as ratios of functions, we arrive at the familiar factorial recurrence, but with an interesting twist: it must hold not only for integers, but also for half-integers! Choosing the classical Gamma-function extension then suggests a formula for the half-derivative of a power function. The original requirements do not force this choice; that distinction will be part of our story.

Along the way, we explore several related ideas. We derive the fundamental properties of the Gamma function, including its connection to the Gaussian integral, and examine the half-derivatives of constants and exponential functions. We also encounter double factorials and their combinatorial interpretation, their connection to the moments of the standard normal distribution, Taylor series, the failure of the ordinary product rule for fractional derivatives, and the Gaussian integral
\[
  \int_{-\infty}^{\infty} e^{-x^2/2}\,dx\ =\ \sqrt{2\pi}.
\] 
To conclude we briefly discuss derivatives of arbitrary real orders and a generalized (fractional) Leibniz rule. These results are classical; our emphasis is on how they arise naturally from elementary mathematical considerations and how they are connected.

This exposition grew out of two lectures in Miller's \emph{Math 409: The little Questions} at Williams College in Fall 2026. It is intended for undergraduate students who have completed single-variable calculus and have some familiarity with real analysis,
particularly infinite series and improper integrals. No prior knowledge of the Gamma function, fractional calculus, or complex
analysis is assumed. It is also a case study of applying simple techniques in innovative ways, including the method of undetermined coefficients and telescoping products. We hope that this approach illustrates how asking seemingly simple questions about familiar mathematical concepts can lead to rich and unexpected connections. 

\section{Desiderata for a fractional derivative}
\label{sec:desiderata}

What properties should the half-derivative have?  We propose two.

\begin{definition}[Composition property]\label{def:composition}
We would like a half-derivative to satisfy
\[
  \Dhalf\!\left(\Dhalf f\right)\ =\ \frac{df}{dx}
\]
for functions where the operations on both sides of the equation are individually well-defined. In particular, we hope this happens for ordinary polynomials and other familiar smooth functions.
\end{definition}

This is a guideline, not a promise that every possible function will satisfy it. The function on which we first act must be eligible for a second half-derivative, too. We'll see a simple limitation of this rule later. Similarly, we would hope that three applications of a ``one-third derivative'' give one ordinary derivative on appropriate functions.

\begin{definition}[Linearity]\label{def:linearity}
The operator $\Dhalf$ should be linear: that is,
\begin{equation}\label{Eq: linearity}
  \Dhalf(af + bg) \ = \  a\,\Dhalf f + b\,\Dhalf g
\end{equation}
for all constants $a, b$ and suitable functions $f, g$.
\end{definition}

Linearity packages two familiar properties: (i) the derivative of a constant multiple is that multiple times the derivative, and (ii) the derivative of a sum is the sum of the derivatives.  It is also the condition that makes $\Dhalf$ an operator on a function space, analogous to a linear map on a vector space, of which the standard derivative is an example. 

\begin{remark}[Non-uniqueness]
These two properties alone cannot pin down $\Dhalf$ uniquely. If $\Dhalf$ satisfies the composition property, so does $-\Dhalf$, since $(-\Dhalf)(-\Dhalf)f=(-1)^2\Dhalf(\Dhalf f)=f'$ wherever these expressions make sense.  This suggests looking for a particularly natural choice, one that also fits with the familiar rules of calculus.
\end{remark}

\section{The ansatz: powers of \texorpdfstring{$x$}{x}}
\label{sec:ansatz}

To find $\Dhalf$ concretely, we start with the simplest non-trivial test functions: monomials $x^n$ for non-negative integers $n$.  Furthermore, monomials are natural building blocks. The function $e^x$ is also an appealing point of entry because $(e^x)' = e^x$, but its Taylor series
\[
  e^x \ =\ \sum_{n=0}^\infty \frac{x^n}{n!}
\]
involves infinitely many monomials.  Understanding $\Dhalf(x^n)$ for each $n$ and then (assuming $\Dhalf$ commutes with the infinite sum) combining gives $\Dhalf(e^x)$. 

\subsection{Shape of the answer}

We want to compute $\Dhalf(x^n)$ when $n$ is a non-negative integer, knowing that
\[
  \Dhalf\!\bigl(\Dhalf(x^n)\bigr) \ =\  \frac{d}{dx}(x^n) \ =\  n\,x^{n-1}.
\]
Let's guess the functional form of $\Dhalf(x^n)$, with the following clues.

\begin{itemize}
  \item It seems reasonable to look for another power of $x$, since ordinary differentiation takes powers to powers.
  \item As in ordinary differentiation, we expect a coefficient depending on the exponent.
  \item To keep the search simple, we first try a single power, with no additional terms.
\end{itemize}

This motivates the following ansatz (educated guess):

\begin{equation}\label{eq:ansatz}
  \Dhalf(x^n) \ = \ A(n)\,x^{B(n)},
\end{equation}
where $A(n)$ and $B(n)$ are functions to be chosen. For this exploration, we use the same ansatz for the non-integer powers that arise after the first application of $\Dhalf$.

\subsection{Determining \texorpdfstring{$B(n)$}{B(n)}}

Applying $\Dhalf$ to both sides of~\eqref{eq:ansatz}, we obtain
\begin{align*}
  \Dhalf\!\bigl(\Dhalf(x^n)\bigr)
  &\ =\ \Dhalf\!\bigl(A(n)\,x^{B(n)}\bigr)\\
  &\ =\ A(n)\,\Dhalf\!\bigl(x^{B(n)}\bigr)\\
  &\ =\ A(n)\,A(B(n))\,x^{B(B(n))},
\end{align*}
where the second equality follows from \eqref{Eq: linearity}: the
half-derivative of a constant multiple of a function is
that same constant times the half-derivative of the function.

For positive integers $n$, our requirement that applying
$\Dhalf$ twice recovers the ordinary derivative means that
the expression above must equal $nx^{n-1}$.
Comparing coefficients and exponents, we arrive at two functional equations:
\begin{align}
  B(B(n)) &\ =\ n-1, \label{eq:Bfunc}\\
  A(n)\,A(B(n)) &\ =\ n. \label{eq:Afunc}
\end{align}

The equation \eqref{eq:Bfunc} can have other solutions\footnote{For example, one could define
$B(n)=n-\frac13$ for odd positive integers $n$ and
$B(n)=n-\frac23$ for even positive integers $n$,
and then prescribe
$B(n-\frac13)=n-1$ in the former case and
$B(n-\frac23)=n-1$ in the latter.
This gives $B(B(n))=n-1$ for every positive
integer $n$, illustrating that our choice is
not unique.}, but one particularly natural choice is 
\begin{equation}\label{e10}
  B(n) \ =\ n - \tfrac{1}{2},
\end{equation}
since 
$$B(B(n)) \ =\  B(n - \tfrac{1}{2}) \ =\  n - \tfrac{1}{2} - \tfrac{1}{2} \ =\ n - 1.$$ 
We adopt this choice, giving
\[
  \Dhalf(x^n)\ =\ A(n)\,x^{n-1/2}.
\]
We shall treat $n=0$ separately (see Subsection \ref{sec:negfact}): the ordinary derivative of $1$ is zero, so comparing nonzero powers does not justify the same exponent argument in that case.

\subsection{Determining \texorpdfstring{$A(n)$}{A(n)}: the functional equation}

With $B(n) = n - \frac{1}{2}$, Equation~\eqref{eq:Afunc} becomes
\begin{equation}\label{eq:Arecurrence}
  A(n)\,A\!\left(n - \tfrac{1}{2}\right) \ =\  n.
\end{equation}
We seek values of $A$ at integers and half-integers satisfying this relation. This does not, by itself, determine them uniquely.

Since direct inspection of~\eqref{eq:Arecurrence} is difficult, we write $A$ as a ratio
\[
  A(n) \ =\  \frac{f(n)}{g(n)}
\]
for functions $f$ and $g$ to be chosen, hoping that
substitution into \eqref{eq:Arecurrence} will lead to cancellations
between numerators and denominators. Indeed,
\[
  A(n)\,A\!\left(n - \tfrac{1}{2}\right) \ =\  \frac{f(n)}{g(n)} \cdot \frac{f(n - \tfrac{1}{2})}{g(n - \tfrac{1}{2})}\  =\ n.
\]
The key observation is that we can arrange telescoping cancellation by setting
\[
  g(n)\ =\ f\!\left(n - \tfrac{1}{2}\right),
\]
which gives
\[
  A(n)\,A\!\left(n - \tfrac{1}{2}\right)
  \ =\ \frac{f(n)}{f(n - \tfrac{1}{2})} \cdot \frac{f(n - \tfrac{1}{2})}{f(n - 1)}
  \ =\ \frac{f(n)}{f(n-1)}\ =\ n.
\]

With this additional choice, our problem reduces to finding a function $f$ satisfying the simple recurrence
\begin{equation}
  f(n) \ =\ n \cdot f(n-1),
\end{equation}
which is the defining recurrence of the factorial function!  Since multiplying $f$ by a nonzero constant does not
change the ratio $f(n)/f(n-1)$, we may normalize
$f(0)=1$, provided that $f(0)\neq 0$.\footnote{
If $f(0)=0$, the recurrence forces $f(n)=0$
for every nonnegative integer $n$, which is incompatible with our construction.}
We then obtain
\[
f(n)\ =\ n! \qquad \text{for } n=0,1,2,\ldots.
\]
Assembling the pieces gives
\begin{equation}\label{e11}
  A(n) \ =\  \frac{f(n)}{g(n)} \ =\ \frac{f(n)}{f(n - \tfrac{1}{2})} \ =\ \frac{n!}{(n - \tfrac{1}{2})!}.
\end{equation}
Now we face the obstacle of half-integer factorials: what does $(n - \tfrac{1}{2})!$ mean?  The factorial function is initially defined for non-negative integers.  For example, when $n = 0$, we need $\left(-\frac{1}{2}\right)!$, and when $n = 1$, we need $\frac{1}{2}!$. This suggests extending the factorial function to non-integer arguments. 
We will choose the classical \emph{Gamma extension}, but neither the factorial recurrence nor our starting requirements uniquely determine this choice.

\begin{remark}[How much freedom remains?]
There is more freedom than simply replacing a half-derivative by its negative. For example, once we find one set of coefficients $A(n)$, we could multiply its values at integer exponents by a nonzero constant $c$ and divide its values at half-integer exponents by $c$. The products $A(n)A(n-\tfrac12)$ would remain unchanged. The Gamma function gives one natural and consistent choice, not the only algebraic possibility. 
\end{remark}

\section{The Gamma function}
\label{sec:gamma}

\subsection{Definition and convergence}

Now we recall the classical Gamma function (see \cite[Eq.~5.2.1]{DLMF}).

\begin{definition}[Gamma function]\label{def:gamma}
For a real number $s>0$, define
\begin{equation}\label{Eq: Ga}
  \Gamma(s) \;:=\; \int_0^\infty e^{-x} x^{s-1}\,dx.
\end{equation}
\end{definition}

\begin{remark}
The integrand can also be written $e^{-x} x^s\,\frac{dx}{x}$.  The measure $\frac{dx}{x}$ is invariant under the rescaling $x \mapsto cx$ (for $c > 0$), a symmetry that makes the Gamma function well-adapted to multiplicative problems.
\end{remark}

\begin{proposition}[Convergence of $\Gamma(s)$]\label{prop:convergence}
For $s>0$, the integral defining $\Gamma(s)$ converges.
\end{proposition}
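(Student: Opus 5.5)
We split the improper integral at $x=1$ and write
\[
  \Gamma(s)\ =\ \int_0^1 e^{-x}x^{s-1}\,dx\ +\ \int_1^\infty e^{-x}x^{s-1}\,dx,
\]
then show each piece converges separately. The integrand is continuous and positive on $(0,\infty)$. So each piece is the limit of an increasing function of the endpoint, namely $\int_\epsilon^1$ as $\epsilon\downarrow 0$ or $\int_1^R$ as $R\to\infty$. By the comparison test for improper integrals of nonnegative functions, it therefore suffices to bound each integrand above by something with a finite integral.

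For the piece near $0$, we use $e^{-x}\le 1$ for $x\ge 0$. This gives $0< e^{-x}x^{s-1}\le x^{s-1}$ on $(0,1]$. We then compute directly
\[
  \int_\epsilon^1 x^{s-1}\,dx\ =\ \frac{1-\epsilon^s}{s}\ \le\ \frac1s.
\]
This bound holds precisely because $s>0$, and it is the only place the hypothesis is needed. When $s\ge 1$ the integrand is bounded on $[0,1]$ anyway, so the issue is only real for $0<s<1$.

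For the tail, the plan is to show that $e^{-x}$ beats any power. Concretely, we claim $x^{s+1}e^{-x}\to 0$ as $x\to\infty$. One way to see this is to repeat L'H\^opital's rule until the power in the numerator is nonpositive. Another is to use the single Taylor-series term $e^{x}\ge x^{N}/N!$ with an integer $N\ge s+1$, which gives
\[
  x^{s+1}e^{-x}\ \le\ N!\,x^{s+1-N}\ \le\ N!\qquad (x\ge 1).
\]
So there is a constant $C$ with $e^{-x}x^{s-1}\le C x^{-2}$ for all $x\ge1$. Since $\int_1^\infty x^{-2}\,dx=1$, the tail converges by comparison.

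The main obstacle is this tail estimate: choosing a clean comparison function and justifying that exponential decay dominates polynomial growth. The Taylor-series bound is the most elementary route, and it fits the paper's earlier use of the series for $e^x$. The near-zero piece is a direct computation. Combining the two pieces gives convergence of $\Gamma(s)$ for every $s>0$. It also gives the crude explicit bound $\Gamma(s)\le \frac1s + N!$, which is finite.
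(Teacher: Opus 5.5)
Your proof is correct and follows the paper's argument. You split at the two ends and compare with $x^{s-1}$ near $0$, using the direct bound $e^{-x}\le 1$ where the paper uses a limit comparison, and you control the tail via $e^{x}\ge x^N/N!$, choosing $N\ge s+1$ to compare with $x^{-2}$ where the paper takes $N>s$ and compares directly with $x^{s-1-N}$.
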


\begin{proof}
We check the two possible locations for singularities: near $x = 0$ and near $x = \infty$.

\medskip
\noindent\textit{Near $x = 0$:} Since $e^{-x} \to 1$ as $x \to 0^+$, the integrand behaves like $x^{s-1}$ near the origin. More precisely, we have that $\lim_{x \to 0}\frac{e^{-x}x^{s - 1}}{x^{s - 1}} = 1$.  We have
\[
  \int_0^1 x^{s-1}\,dx \ = \ \frac{x^s}{s}\Big|_0^1 \ =\  \frac{1}{s}
\]
provided $s>0$ (so that $x^s \to 0$ as $x \to 0^+$).

\medskip
\noindent\textit{Near $x = \infty$:} For any fixed $s>0$ and large $x$, the factor $e^{-x}$ decays much faster than any power of $x$ grows. More precisely, choose an integer $N>s$; then
\[
  x^{s-1} e^{-x} \ \leq\ \frac{x^{s-1}}{x^N/N!}\ =\ N!\,x^{s-1-N},
\]
since $e^x \geq x^N/N!$.  Since $s-1-N<-1$, the integral $\int_1^\infty x^{s-1-N}\,dx$ converges.
\end{proof}


\subsection{Values at positive integers}

Now we show that the Gamma function \eqref{Eq: Ga} admits simple recursive identities.

\begin{lemma}[Functional equation]\label{lem:functional}
For real $s>0$,
\begin{equation}\label{Eq: rec}
  \Gamma(s+1) \ =\ s\,\Gamma(s).
\end{equation}
\end{lemma}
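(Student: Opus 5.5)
The plan is to integrate by parts in the defining integral \eqref{Eq: Ga} for $\Gamma(s+1)$, moving the derivative from $e^{-x}$ onto the power $x^s$. Because both integrals are improper at $0$ and at $\infty$, I will first work on a finite interval $[\varepsilon, R]$ with $0<\varepsilon<R$ and pass to the limit only at the end. Proposition~\ref{prop:convergence} guarantees that $\Gamma(s)$ and $\Gamma(s+1)$ both converge for $s>0$. So the only new work is controlling the boundary terms.

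Take $u = x^s$ and $dv = e^{-x}\,dx$, so $du = s x^{s-1}\,dx$ and $v = -e^{-x}$. Then
\[
  \int_\varepsilon^R e^{-x} x^{s}\,dx \ =\ \Bigl[-x^s e^{-x}\Bigr]_\varepsilon^R \;+\; s\int_\varepsilon^R e^{-x} x^{s-1}\,dx
  \ =\ \varepsilon^s e^{-\varepsilon} - R^s e^{-R} + s\int_\varepsilon^R e^{-x} x^{s-1}\,dx .
\]
On $[\varepsilon,R]$ everything is smooth, so this step needs no justification beyond ordinary calculus.

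Next, let $\varepsilon\to 0^+$ and $R\to\infty$. The integrals converge to $\Gamma(s+1)$ and $s\,\Gamma(s)$ by Proposition~\ref{prop:convergence}. The term $\varepsilon^s e^{-\varepsilon}\to 0$ because $s>0$; this is exactly where the hypothesis $s>0$ is used. For $R^s e^{-R}\to 0$, I would reuse the bound from the proof of Proposition~\ref{prop:convergence}. Pick an integer $N>s$; since $e^R \ge R^N/N!$, we get
\[
  R^s e^{-R} \;\le\; N!\,R^{s-N} \;\to\; 0 .
\]
Combining these limits gives $\Gamma(s+1)=s\,\Gamma(s)$.

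The main obstacle, mild as it is, is the handling of these boundary terms. One must avoid writing $[-x^s e^{-x}]_0^\infty$ formally without checking that both ends actually vanish. The truncation to $[\varepsilon,R]$ and the two limit arguments above settle this.
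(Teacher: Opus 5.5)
Your proof is correct and uses the same integration by parts as the paper, with $u=x^s$ and $dv=e^{-x}\,dx$. The only difference is rigor: you truncate to $[\varepsilon,R]$ and bound $R^s e^{-R}\le N!\,R^{s-N}$, whereas the paper writes the boundary term directly on $[0,\infty)$ and cites L'Hopital's rule for the decay at infinity.
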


\begin{proof}
Integrate by parts with $u = x^s$ and $dv = e^{-x}\,dx$ (so $du = sx^{s-1}\,dx$ and $v = -e^{-x}$):
\begin{align*}
  \Gamma(s+1) &\ =\ \int_0^\infty e^{-x} x^s\,dx \\
  &\ =\ \Bigl[-e^{-x} x^s\Bigr]_0^\infty + s\int_0^\infty e^{-x} x^{s-1}\,dx.
\end{align*}
The boundary term vanishes: at $x = \infty$, $e^{-x} x^s \to 0$ (exponential beats polynomial\footnote{This limit can be verified using L'Hopital's Rule.}); at $x = 0$, $x^s \to 0$ for $s>0$.  The remaining integral is exactly $\Gamma(s)$.
\end{proof}

\begin{theorem}[Gamma generalizes factorial]\label{thm:factorial}
For every positive integer $n$,
\[
  \Gamma(n)\ =\ (n-1)!.
\]
\end{theorem}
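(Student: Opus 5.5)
The plan is induction on $n$, with Lemma~\ref{lem:functional} supplying the inductive step and a direct evaluation of the defining integral~\eqref{Eq: Ga} supplying the base case.

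For the base case $n=1$ we compute
\[
  \Gamma(1)\ =\ \int_0^\infty e^{-x}\,dx\ =\ \lim_{R\to\infty}\bigl(1-e^{-R}\bigr)\ =\ 1\ =\ 0!.
\]
Here the convention $0!=1$ is used, and it matches the normalization $f(0)=1$ chosen in Section~\ref{sec:ansatz}.

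For the inductive step, assume $\Gamma(n)=(n-1)!$ for some positive integer $n$. Since $n>0$, Lemma~\ref{lem:functional} applies with $s=n$ and gives
\[
  \Gamma(n+1)\ =\ n\,\Gamma(n)\ =\ n\,(n-1)!\ =\ n!.
\]
This is exactly the claim for $n+1$, so induction completes the proof. Equivalently, one could telescope $\Gamma(n)=(n-1)(n-2)\cdots 1\cdot\Gamma(1)$, applying the lemma $n-1$ times.

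There is no real obstacle. The only points to check are that the base-case integral converges, which is a direct computation (or an instance of Proposition~\ref{prop:convergence}), and that Lemma~\ref{lem:functional} is used only for $s>0$, where its boundary-term argument is valid.
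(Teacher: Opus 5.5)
Your proposal is correct and is essentially the paper's proof. Both evaluate $\Gamma(1)=1$ directly and then apply Lemma~\ref{lem:functional} repeatedly; the paper writes this as the telescoping product $\Gamma(n)=(n-1)(n-2)\cdots\Gamma(1)$, and your formal induction is the same argument.
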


\begin{proof}
We have $\Gamma(1) = \int_0^\infty e^{-x}\,dx = 1$, so applying Lemma~\ref{lem:functional} repeatedly gives
\[
  \Gamma(n) \ =\  (n-1)\,\Gamma(n-1)\  =\ (n-1)(n-2)\,\Gamma(n-2)\ =\ \cdots\ =\ (n-1)!\,\Gamma(1)\ =\ (n-1)!.\qedhere
\]
\end{proof}

\subsection{Value at positive half-integers}
Thanks to Lemma \ref{lem:functional}, the Gamma function at half-integers is determined by $\Gamma(1/2)$. We provide a proof of its value next.

\begin{theorem}\label{thm:gammaHalf}We have
$\displaystyle\Gamma\!\left(\tfrac{1}{2}\right) = \sqrt{\pi}$.
\end{theorem}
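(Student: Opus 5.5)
The plan is to reduce $\Gamma(1/2)$ to the Gaussian integral and then evaluate that integral by the classical polar-coordinates trick. By Definition~\ref{def:gamma},
\[
  \Gamma\!\left(\tfrac12\right) \ =\ \int_0^\infty e^{-x}x^{-1/2}\,dx,
\]
which converges by Proposition~\ref{prop:convergence}. We substitute $x=u^2$ with $u>0$, so that $dx=2u\,du$ and $x^{-1/2}=u^{-1}$. The factor $u$ cancels and we obtain
\[
  \Gamma\!\left(\tfrac12\right)\ =\ 2\int_0^\infty e^{-u^2}\,du\ =\ \int_{-\infty}^\infty e^{-u^2}\,du,
\]
where the last step uses that the integrand is even. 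It therefore suffices to show $I:=\int_{-\infty}^\infty e^{-u^2}\,du=\sqrt{\pi}$.

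Since $I>0$, it is enough to prove $I^2=\pi$. We write
\[
  I^2\ =\ \int_{-\infty}^\infty e^{-x^2}\,dx\int_{-\infty}^\infty e^{-y^2}\,dy\ =\ \iint_{\mathbb{R}^2} e^{-(x^2+y^2)}\,dx\,dy,
\]
and pass to polar coordinates $x=r\cos\theta$, $y=r\sin\theta$, with $dx\,dy=r\,dr\,d\theta$. This gives
\[
  I^2\ =\ \int_0^{2\pi}\!\int_0^\infty e^{-r^2}r\,dr\,d\theta\ =\ 2\pi\cdot\tfrac12\ =\ \pi.
\]
The inner integral is elementary, with antiderivative $-\tfrac12 e^{-r^2}$.

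The main obstacle is justifying the passage from a product of improper one-dimensional integrals to an improper double integral, and then the change of variables on all of $\mathbb{R}^2$. For readers with only single-variable background, I would handle this by a squeeze argument. The integrand is positive, so the integral over the square $[-R,R]^2$ lies between the integrals over the disks of radius $R$ and $R\sqrt2$. These disk integrals equal $\pi(1-e^{-R^2})$ and $\pi(1-e^{-2R^2})$, each computed by polar coordinates on a bounded region, where the change of variables is standard. Meanwhile, the square integral equals $\bigl(\int_{-R}^R e^{-x^2}dx\bigr)^2$ by Fubini for continuous functions on a rectangle. Letting $R\to\infty$, both bounds tend to $\pi$, so $I^2=\pi$ and hence $\Gamma(1/2)=\sqrt{\pi}$.
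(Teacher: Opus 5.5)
Your proof is correct and takes the paper's route: reduce $\Gamma(1/2)$ to a Gaussian integral by a substitution. The paper uses $x=t^2/2$ and the normalization $\int_{-\infty}^{\infty}e^{-t^2/2}\,dt=\sqrt{2\pi}$, which it takes as known, whereas you use $x=u^2$ and also prove $\int_{-\infty}^{\infty}e^{-u^2}\,du=\sqrt{\pi}$ yourself, so your squeeze between the disks of radius $R$ and $R\sqrt2$ is a self-contained version of the ``standard exercise'' the paper only mentions.
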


\begin{proof}
We use the standard Gaussian integral
(see, for example, \cite{MillerProbability}):
\begin{equation}
\int_{-\infty}^{\infty} e^{-t^2/2}\,dt
\ =\ \sqrt{2\pi}.
\end{equation}
(We take this as known; its proof via Fubini and polar coordinates is a standard exercise in multivariable calculus.)

Since the integrand is even, the integral from $0$ to $\infty$ equals half the full integral:
\[
  \int_0^\infty e^{-t^2/2}\,dt \ =\ \frac{\sqrt{2\pi}}{2}.
\]
Make the substitution $x = t^2/2$, so $t = \sqrt{2x}$ and $dt = \frac{1}{\sqrt{2x}}\,dx = \frac{dx}{\sqrt{2x}}$, to obtain
\begin{align*}
  \int_0^\infty e^{-t^2/2}\,dt
  &\ =\ \int_0^\infty e^{-x} \cdot \frac{dx}{\sqrt{2x}} \\
  &\ =\ \frac{1}{\sqrt{2}} \int_0^\infty e^{-x} x^{-1/2}\,dx \\
  &\ =\  \frac{1}{\sqrt{2}}\,\Gamma\!\left(\tfrac{1}{2}\right).
\end{align*}
Setting the two expressions equal gives
\[
  \frac{1}{\sqrt{2}}\,\Gamma\!\left(\tfrac{1}{2}\right)\ =\ \frac{\sqrt{2\pi}}{2};
\]
hence,
\[
  \Gamma\!\left(\tfrac{1}{2}\right)\ =\ \sqrt{\pi}. \qedhere
\]
\end{proof}

\begin{remark}
If we extend the factorial notation by $t!=\Gamma(t+1)$, then $(-\tfrac12)!=\sqrt{\pi}$. This is not the number of arrangements of ``negative one-half objects'': the combinatorial meaning of factorial does not extend in that literal way. It is instead a useful analytic extension of familiar factorial identities.
\end{remark}

Using the functional equation $\Gamma(s+1) = s\,\Gamma(s)$ repeatedly, we get
\begin{align*}
  \Gamma\!\left(\tfrac{3}{2}\right) &\ =\ \tfrac{1}{2}\,\Gamma\!\left(\tfrac{1}{2}\right)\ =\  \frac{\sqrt{\pi}}{2}\ =\ \frac{1\cdot \sqrt{\pi}}{2},\\
  \Gamma\!\left(\tfrac{5}{2}\right) &\ =\ \tfrac{3}{2}\,\Gamma\!\left(\tfrac{3}{2}\right) \ =\ \frac{3\sqrt{\pi}}{4}\ =\ \frac{3\cdot 1\cdot \sqrt{\pi}}{4},\\
  \Gamma\!\left(\tfrac{7}{2}\right) &\ =\ \tfrac{5}{2}\,\Gamma\!\left(\tfrac{5}{2}\right)\ =\ \frac{15\sqrt{\pi}}{8}\ =\ \frac{5\cdot 3\cdot 1\cdot \sqrt{\pi}}{8},\\
  \Gamma\!\left(\tfrac{2n+1}{2}\right) &\ =\  \frac{(2n-1)!!\,\sqrt{\pi}}{2^n},
\end{align*}
where $(2n-1)!! = 1 \cdot 3 \cdot 5 \cdots (2n-1)$ is the double factorial (product of odd numbers up to $2n-1$), with $(-1)!!=1$ when $n=0$. The general formula is immediate from induction. 

\subsection{Extension to negative non-integers}

The functional equation $\Gamma(s)=\Gamma(s+1)/s$ also gives values at negative nonintegers. For example,
\[
  \Gamma\!\left(-\tfrac12\right)
  \ =\ \frac{\Gamma(\tfrac12)}{-\tfrac12}\ =\ -2\sqrt{\pi}.
\]
At $0,-1,-2,\ldots$, however, the Gamma function has no finite value: its magnitude tends to infinity as one approaches any of these numbers. They are called \emph{poles}.

A useful convention is to extend the reciprocal $1/\Gamma(s)$ by setting
\[
  \frac1{\Gamma(0)}\ =\ \frac1{\Gamma(-1)}\ =\ \frac1{\Gamma(-2)}\ =\ \cdots\ =\ 0.
\]
This extension is continuous at each of these points. We use this convention whenever a Gamma quotient has such a denominator.

\subsection{What should a negative factorial mean?}

We have seen that the factorial function can be extended beyond the nonnegative integers using the Gamma function. But this raises a natural question: what should $(-1)!$, $(-2)!$, or more generally $(-n)!$ mean when $n$ is a positive integer?

Rather than immediately turning to the Gamma function, let's first ask what we would expect these quantities to be, from a combinatorial viewpoint. We use binomial coefficients as a lever, recalling that
$$\binom{n}{k}\ =\ \frac{n!}{k!(n-k)!},$$
which counts the number of ways to choose $k$ objects from a collection of $n$ objects, where order does not matter. For example,
$$\binom{5}{3}\ =\ \frac{5!}{3!2!}\ =\ 10.$$
But what happens if we reverse the roles of $3$ and $5$? How many ways are there to choose five objects from a collection of only three objects? Clearly, there are none. Thus, $\binom{3}{5}\ =\ 0$.
Now suppose we try to use the same factorial formula, even though we have no right to apply it. Let's do so formally and see
$$\binom{3}{5}\ =\ \frac{3!}{5!(-2)!}.$$
The numerator and $5!$ are both positive and finite. If we want this expression to agree with the combinatorial answer of zero, then $(-2)!$ would somehow have to behave like an infinite quantity.

The same reasoning applies more generally. For positive integers $n$ and $k$, we know from counting that
$$\binom{n}{n+k}\ =\ 0.$$
Formally applying the factorial formula gives
$$\binom{n}{n+k}
\ =\ \frac{n!}{(n+k)!(-k)!}.$$
This again suggests that $(-k)!$ should behave like infinity whenever $k$ is a positive integer.

Of course, this is not a proof that negative factorials equal infinity. The usual factorial formula for binomial coefficients was not originally defined when the lower argument exceeds the upper, so we cannot simply substitute these values and treat the resulting expression as an ordinary numerical identity. Instead, we should view this as a consistency check: if we hope to extend the factorial function while preserving the familiar behavior of binomial coefficients, then negative integer factorials ought to behave in some sense like infinite quantities.

But what exactly do we mean by ``infinity?" In complex analysis, the extended complex plane has only one point at infinity, unlike the extended real line, which distinguishes between $+\infty$ and $-\infty$. Since the Gamma function has poles at the non-positive integers, we may write, with this interpretation,
$$(-n)!\ =\ \infty,\qquad n=1,2,3,\ldots.$$
More precisely, these factorials are undefined as finite numbers, but their Gamma-function extensions tend to the single point at infinity in the extended complex plane.

This example illustrates a useful principle in mathematical discovery. When extending a familiar definition, we should not only ask what formulas we can write down, but also whether those formulas agree with properties we already understand. Here, a simple counting problem gives us a clue about what an extension of factorials ought to look like, and the Gamma function passes this consistency check.

\section{The half-derivative and general fractional derivative}
\label{sec:formula}

Our exploration has suggested a particular rule for powers of $x$: due to \eqref{e10} and \eqref{e11}, we guess that
$\Dhalf(x^n)$ should be equal to
$$\frac{\Gamma(n+1)}{\Gamma(n+\frac{1}{2})}x^{n-1/2}.$$
Our guess matches the existing literature. For $x>0$, define the \emph{half-integral} by
\begin{equation}\label{eq:half-integral}
  I^{1/2}f(x) \ :=\ \frac1{\sqrt\pi}\int_0^x\frac{f(t)}{\sqrt{x-t}}\,dt,
\end{equation}
whenever the integral exists. We then define the \emph{half-derivative} by
\begin{equation}\label{eq:RL-half}
  \Dhalf f(x)\ :=\ \frac{d}{dx}I^{1/2}f(x).
\end{equation}
This is the \textit{Riemann-Liouville half-derivative} with starting point $0$. Notice the order: first a weighted integral, then an ordinary derivative. The integral is a natural ``halfway step'' because its kernel contains a square root. For our purpose, the important fact is that it gives exactly the formula we discovered. See \cite{feng2021, OldhamSpanier, Samko} for further background on this theory.

\begin{theorem}[Half-derivative of $x^n$]\label{thm:halfderiv}
For every non-negative integer $n$ and $x>0$, the half-derivative defined in \eqref{eq:RL-half} satisfies
\begin{equation}\label{eq:halfderiv}
  \Dhalf(x^n)\ =\ \frac{\Gamma(n+1)}{\Gamma(n+\tfrac12)}x^{n-1/2}
  \ =\ \frac{n!}{\Gamma(n+\tfrac12)}x^{n-1/2}.
\end{equation}
\end{theorem}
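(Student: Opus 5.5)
The plan is to compute the half-integral $I^{1/2}(x^n)$ explicitly and then differentiate it. In \eqref{eq:half-integral} with $f(t)=t^n$, I will substitute $t=xu$ for fixed $x>0$, so $dt=x\,du$ and $\sqrt{x-t}=\sqrt{x}\sqrt{1-u}$. This gives
\[
  I^{1/2}(x^n)\ =\ \frac{x^{n+1/2}}{\sqrt\pi}\int_0^1 u^n(1-u)^{-1/2}\,du.
\]
The remaining integral is a constant independent of $x$. It converges, since the only singularity is at $u=1$ and behaves like $(1-u)^{-1/2}$. Differentiating then gives
\[
  \Dhalf(x^n)\ =\ \frac{(n+\tfrac12)}{\sqrt\pi}\,J_n\,x^{n-1/2},\qquad J_n:=\int_0^1 u^n(1-u)^{-1/2}\,du,
\]
so everything reduces to evaluating $J_n$.

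The main obstacle is computing $J_n$ without appealing to the general Beta–Gamma identity, which the paper has not developed. I would do it by elementary means. First, substituting $v=1-u$ gives $J_0=\int_0^1 v^{-1/2}\,dv=2$. Next I would find a recurrence by integrating by parts, with $dv=(1-u)^{-1/2}du$ and $v=-2(1-u)^{1/2}$:
\[
  J_n\ =\ 2n\int_0^1 u^{n-1}(1-u)^{1/2}\,du .
\]
Writing $(1-u)^{1/2}=(1-u)(1-u)^{-1/2}$ splits this as $2n(J_{n-1}-J_n)$. Hence $J_n=\frac{2n}{2n+1}J_{n-1}$. Iterating from $J_0=2$ yields
\[
  J_n\ =\ \frac{2\cdot 2^n n!}{(2n+1)!!}.
\]
Alternatively, I could substitute $u=1-v^2$ to get a polynomial integral and expand it binomially. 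The recurrence is cleaner, though.

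To finish, I would compare with the half-integer values of $\Gamma$ computed above. From $\Gamma\!\left(\tfrac{2n+1}{2}\right)=\frac{(2n-1)!!\sqrt\pi}{2^n}$, which rests on Theorem~\ref{thm:gammaHalf} and Lemma~\ref{lem:functional}, we get $(n+\tfrac12)\cdot\frac{1}{\sqrt\pi}\cdot\frac{2^{n+1}n!}{(2n+1)!!}=\frac{n!\,2^n}{(2n-1)!!}=\frac{n!\sqrt\pi}{\Gamma(n+\frac12)}\cdot\frac{1}{\sqrt\pi}$. The $\sqrt\pi$ factors cancel correctly once the bookkeeping is done carefully, giving $n!/\Gamma(n+\tfrac12)$. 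Theorem~\ref{thm:factorial} gives $n!=\Gamma(n+1)$, which completes \eqref{eq:halfderiv}.

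As a sanity check I would verify the case $n=0$ directly. There $I^{1/2}(1)=2\sqrt{x}/\sqrt\pi$, whose derivative is $1/\sqrt{\pi x}=x^{-1/2}/\Gamma(\tfrac12)$, as expected.
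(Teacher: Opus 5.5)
Your proposal is correct and follows essentially the same route as the paper. The substitution $t=xu$, the value $J_0=2$, and the integration-by-parts recurrence $J_n=\frac{2n}{2n+1}J_{n-1}$ are the paper's steps; you only evaluate $J_n$ through double factorials and $\Gamma(n+\tfrac12)=\frac{(2n-1)!!\sqrt\pi}{2^n}$, instead of telescoping directly to $J_n=\frac{\sqrt\pi\,\Gamma(n+1)}{\Gamma(n+3/2)}$. One small slip: the middle term of your final displayed chain should be $\frac{n!\,2^n}{(2n-1)!!\,\sqrt\pi}$, since you dropped the factor $1/\sqrt\pi$, but the two ends of that chain agree, so your conclusion $n!/\Gamma(n+\tfrac12)$ stands.
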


\begin{proof}
Put $t=xu$ in \eqref{eq:half-integral}. We get
\[
 I^{1/2}(x^n)\ =\ \frac{x^{n+1/2}}{\sqrt\pi}
 \underbrace{\int_0^1\frac{u^n}{\sqrt{1-u}}\,du}_{J_n}.
\]
Here $J_0=2$. For $n\geq1$, integration by parts, with $a = u^n$ and $db = du/\sqrt{1-u}$, gives
\[
 J_n\ =\ 2n\int_0^1u^{n-1}\sqrt{1-u}\,du
     \ =\ 2n(J_{n-1}-J_n),
\]
so 
$$J_n\ =\ \frac{n}{n+1/2}J_{n-1}.$$ The Gamma recurrence \eqref{Eq: rec} and $\Gamma(3/2)=\sqrt\pi/2$ now give
\begin{align*}
 J_n&\ =\ \frac{n}{n+1/2} \cdot \frac{n-1}{n-1/2} \cdots \ \frac{2}{3/2}J_{0}\\
 &\ =\ \frac{2\Gamma(n+1)}{\frac{\Gamma(n+3/2)}{\Gamma(3/2)}}\ =\ \frac{\sqrt{\pi}\Gamma(n+1)}{\Gamma(n+3/2)}.
\end{align*}
Thus
\[
 I^{1/2}(x^n)\ =\ \frac{\Gamma(n+1)}{\Gamma(n+3/2)}x^{n+1/2}.
\]
Differentiating and using $\Gamma(n+3/2)=(n+1/2)\Gamma(n+1/2)$ proves the claimed formula.
\end{proof}

\begin{example}
For $n = 1$,
$$\Dhalf(x)\ =\ \frac{1!}{\Gamma(3/2)}\,x^{1/2}\ =\ \frac{1}{\sqrt{\pi}/2}\,x^{1/2}\ =\ \frac{2}{\sqrt{\pi}}\,x^{1/2}.$$
Applying $\Dhalf$ again,
$$\Dhalf\!\left(\frac{2}{\sqrt{\pi}} x^{1/2}\right) \ =\ \frac{2}{\sqrt{\pi}} \cdot \frac{\Gamma(3/2)}{\Gamma(1)}\,x^0 \ =\  \frac{2}{\sqrt{\pi}} \cdot \frac{\sqrt{\pi}/2}{1} \ =\ 1.$$
And indeed, $\frac{d}{dx}(x) = 1$. 
\end{example}

\begin{example}
For $n = 2$, 
\begin{equation}\label{e3}\Dhalf(x^2) \ =\  \frac{2!}{\Gamma(5/2)}\,x^{3/2} \ =\ \frac{2}{3\sqrt{\pi}/4}\,x^{3/2} \ =\ \frac{8}{3\sqrt{\pi}}\,x^{3/2}.\end{equation}
Applying $\Dhalf$ again,
$$\Dhalf\!\left(\frac{8}{3\sqrt{\pi}}x^{3/2}\right)\ =\  \frac{8}{3\sqrt{\pi}}\cdot\frac{\Gamma(5/2)}{\Gamma(2)}\,x \ =\ \frac{8}{3\sqrt{\pi}} \cdot \frac{3\sqrt{\pi}/4}{1}\,x \ =\ 2x,$$
and of course, $\frac{d}{dx}(x^2) = 2x$. 
\end{example}

\subsection{Verification via a second derivation}
\label{sec:second-deriv}

There is a slicker (though less discovery-oriented) way to guess~\eqref{eq:halfderiv}. As 
$$n(n-1)\cdots (n-(k-1))\ =\ \frac{n!}{(n-k)!},$$
we have
\begin{equation}\label{eq:kth-deriv}
  \frac{d^k}{dx^k}(x^n) \ =\  n(n-1)\cdots(n-(k-1))\,x^{n-k}\ =\ \frac{n!}{(n-k)!}\,x^{n-k}.
\end{equation}
Then
\[
  \frac{n!}{(n-k)!} \ =\ \frac{\Gamma(n+1)}{\Gamma(n-k+1)}.
\]
suggests replacing the integer $k$ by a noninteger order and keeping the same quotient:
\[
  \mathscr{D}^k(x^n) \ :=\ \frac{\Gamma(n+1)}{\Gamma(n-k+1)}\,x^{n-k}.
\]
Setting $k = 1/2$ recovers~\eqref{eq:halfderiv}.

This is a useful shortcut for discovering the formula, but it doesn't motivate the appearance of the Gamma function. The integral calculation above supplies that verification when $k=1/2$. When $k>n$ is an integer, the usual derivative is zero; the convention $1/\Gamma(0)=1/\Gamma(-1)=\cdots=0$ also recovers this fact.

\subsection{General fractional derivatives}

The formula for the half-derivative suggests a rule for other orders, and even for powers with exponents that are not integers. In the Riemann-Liouville approach the corresponding power rule is
\begin{equation}\label{eq:fractional-general}
  \displaystyle
  \mathscr{D}^{\alpha}(x^{\beta})
  \ =\ \frac{\Gamma(\beta+1)}{\Gamma(\beta-\alpha+1)}x^{\beta-\alpha},
\end{equation}
for $x>0$, $\beta>-1$, and positive orders $\alpha$, with the reciprocal-Gamma convention from Section~\ref{sec:gamma}. In this paper, we take \eqref{eq:fractional-general} as the natural extension of the power rule; the detailed construction for general $\alpha$ can be found in \cite{OldhamSpanier,Samko}.

For example, three applications of $\mathscr{D}^{1/3}$ to $x^n$ give the ordinary derivative of $x^n$ for nonnegative integers $n$: successive Gamma factors cancel. More broadly, such cancellations explain why fractional orders can often be added when operators are composed. There is one caution: compositions do not work for every function without restriction, and functions with singularities can break our methodology.

\begin{remark}[Why domains matter]\label{rem:composition-caution}
The power $x^{-1/2}$ illustrates the caution. By \eqref{eq:fractional-general},
\[
 \Dhalf(x^{-1/2})
 \ =\ \frac{\Gamma(1/2)}{\Gamma(0)}x^{-1}\ =\ 0.
\]
Taking another half-derivative still gives zero, while
\[
 \frac{d}{dx}x^{-1/2}\ =\ -\frac12x^{-3/2}\ \neq \ 0.
\]
This does not contradict our examples with polynomials: $x^{-1/2}$ is singular at the starting point $x=0$. Thus, applying two half-derivatives to obtain one ordinary derivative is a meaningful goal for well-behaved functions, not an identity to use without checking the setting.
\end{remark}

\begin{remark}[Other orders]
There is no difficulty writing the Gamma quotient in \eqref{eq:fractional-general} for an irrational order such as $\sqrt2$ or $\pi$. Negative orders correspond to fractional integration rather than differentiation; for example, $\mathscr{D}^{-1}f(x)=\int_0^x f(t)\,dt$ in this convention. Different definitions of fractional derivatives are possible, and they need not agree on all functions.
\end{remark}

\subsection{The half-derivative of the constant function}
\label{sec:negfact}

Formula~\eqref{eq:halfderiv} applies to $n = 0$ as well:
\[
  \Dhalf(x^0) \ =\ \Dhalf(1)\ =\ \frac{\Gamma(1)}{\Gamma(1/2)}\,x^{-1/2}\ =\ \frac{1}{\sqrt{\pi}}\,x^{-1/2}.
\]
This is initially surprising: unlike the ordinary derivative, the half-derivative of a constant is not zero. Nevertheless, the composition rule works here. In fact, the integral definition gives an especially simple explanation:
\[
 I^{1/2}(x^{-1/2})\ =\ \frac1{\sqrt\pi}
 \int_0^x\frac{dt}{\sqrt{t}\sqrt{x-t}}
 \ =\ \sqrt\pi.
\]
The last equality follows by the substitution $t=x\sin^2\theta$. Since the half-integral is constant, its derivative is zero. Consequently,
\[
 \Dhalf\bigl(\Dhalf(1)\bigr)
 \ =\ \frac1{\sqrt\pi}\Dhalf(x^{-1/2})\ =\ 0
 \ =\ \frac{d}{dx}(1).
\]
Thus, the half-derivative of a constant need not be constant, even though applying it twice recovers the usual answer.

\subsection{The fractional Leibniz rule}

A natural follow-up question: does the product rule hold for $\Dhalf$? We can test this on monomials.  Take $f(x) = x^a$ and $g(x) = x^b$, with $a,b$ nonnegative integers.  Then $f(x)g(x) = x^{a+b}$, and
\[
  \Dhalf(x^{a+b}) \ =\  \frac{\Gamma(a+b+1)}{\Gamma(a+b+1/2)}\,x^{a+b-1/2}.
\]
On the other hand,
\begin{align*}
  &(\Dhalf f(x))\,g(x) + f(x)\,(\Dhalf g(x))\\
  &\ =\ \frac{\Gamma(a+1)}{\Gamma(a+1/2)}\,x^{a-1/2} \cdot x^b + x^a \cdot \frac{\Gamma(b+1)}{\Gamma(b+1/2)}\,x^{b-1/2}\\
  &\ =\ \left(\frac{\Gamma(a+1)}{\Gamma(a+1/2)} + \frac{\Gamma(b+1)}{\Gamma(b+1/2)}\right) x^{a+b-1/2}.
\end{align*}

These are equal only if 
\begin{equation}\label{e1}\frac{\Gamma(a+b+1)}{\Gamma(a+b+1/2)} \ =\  \frac{\Gamma(a+1)}{\Gamma(a+1/2)} + \frac{\Gamma(b+1)}{\Gamma(b+1/2)},\end{equation} which is false in general.  For example, taking  $a = b = 1$ turns left side of \eqref{e1} into
$$\frac{\Gamma(3)}{\Gamma(5/2)}\ =\ \frac{2}{3\sqrt{\pi}/4}\ =\ \frac{8}{3\sqrt{\pi}},$$
while the right side is
$$2\frac{\Gamma(2)}{\Gamma(3/2)}\ =\ \frac{2}{\sqrt{\pi}/2}\ =\ \frac{4}{\sqrt{\pi}}.$$

\medskip
In fact, a generalized Leibniz rule, going back to Liouville \cite{Liouville1832}, takes the form of an infinite series when the functions are sufficiently regular and the series is convergent:
\begin{equation}\label{e2}\mathscr{D}^\alpha(fg) \ =\ \sum_{k=0}^\infty \binom{\alpha}{k} (\mathscr{D}^{\alpha-k} f)(\mathscr{D}^k g),\end{equation}
where $\binom{\alpha}{k} = \frac{\Gamma(\alpha+1)}{\Gamma(k+1)\Gamma(\alpha-k+1)}$ is the generalized binomial coefficient. The infinite series is not automatically convergent for arbitrary functions, but it becomes a finite sum in our next example. When $\alpha=1$, the terms with $k\geq2$ vanish, recovering the classical product rule. Taking $f=g=x$, the terms with integer $k\geq2$ vanish because
$\mathscr{D}^k(x)=0$. Thus,
\begin{align*}
  \mathscr{D}^{1/2}(x^2)
   &\ =\ \sum_{k=0}^\infty\binom{1/2}{k}
     (\mathscr{D}^{1/2-k}x)(\mathscr{D}^k x)\\
   &\ =\ x\Dhalf(x)+\frac12\mathscr{D}^{-1/2}(x)\\
   &\ =\ \frac{\Gamma(2)}{\Gamma(3/2)}x^{3/2}
     +\frac12\frac{\Gamma(2)}{\Gamma(5/2)}x^{3/2}\\
   &\ =\ \frac{8}{3\sqrt\pi}x^{3/2},
\end{align*}
which matches \eqref{e3}.

\subsection{The half-derivative of \texorpdfstring{$e^x$}{e^x}}
\label{sec:exponential}

With the formula for $\Dhalf(x^n)$ in hand, we can compute the half-derivative of $e^x$ by applying it to the Taylor series term by term:
\[
  \Dhalf(e^x) \ =\ \Dhalf\!\left(\sum_{n=0}^\infty \frac{x^n}{n!}\right) \ =\ \sum_{n=0}^\infty \frac{1}{n!}\,\Dhalf(x^n).
\]
(Appendix~\ref{app:interchange} explains why the term-by-term calculation is justified in this example.) Substituting~\eqref{eq:halfderiv} gives
\begin{equation}\label{eq:halfderivexp}
  \Dhalf(e^x) \ =\  \sum_{n=0}^\infty \frac{1}{n!} \cdot \frac{n!}{\Gamma(n + 1/2)}\,x^{n-1/2}
  \ =\ x^{-1/2} \sum_{n=0}^\infty \frac{x^n}{\Gamma(n + 1/2)}.
\end{equation}

Using the half-integer Gamma values $\Gamma(n + \tfrac{1}{2}) = \frac{(2n-1)!!}{2^n}\sqrt{\pi}$, we can write each term explicitly. Recall that, for a positive integer $m$,
\[
  m!! \ =\ m(m-2)(m-4)\cdots
\]
terminates at $1$ (if $m$ is odd) or $2$ (if $m$ is even).  Also $0!! = 1$ and $(-1)!! = 1$.
As $2^n \cdot n! = (2n)!!$,  we have
\[
  \frac{n!}{\Gamma(n + 1/2)} \ =\  \frac{n!}{\frac{(2n-1)!!}{2^n}\sqrt{\pi}} \ =\  \frac{2^n \cdot n!}{(2n-1)!!\sqrt{\pi}} \ =\ \frac{(2n)!!}{(2n-1)!!\,\sqrt{\pi}}.
\]
As a result,
\begin{equation}\label{eq:halfderivexp2}
  \Dhalf(e^x) \ =\ \frac{x^{-1/2}}{\sqrt{\pi}} \sum_{n=0}^\infty \frac{(2n)!!}{(2n-1)!!}\,\frac{x^n}{n!}.
\end{equation}

This function is not $e^x$ itself, but its half-derivative has a particularly pleasant alternative expression. Recall the \emph{error function} (see \cite[Eq.~7.2.1]{DLMF}), defined by
\[
  \operatorname{erf}(z)\ :=\ \frac2{\sqrt\pi}\int_0^z e^{-u^2}\,du.
\]
Using the integral definition \eqref{eq:half-integral} and the change of variables $u=\sqrt{x-t}$, we obtain
\begin{align*}
 I^{1/2}(e^x)
 &\ =\ \frac{e^x}{\sqrt\pi}\int_0^x\frac{e^{-(x-t)}}{\sqrt{x-t}}\,dt\\
 &\ =\ \frac{2e^x}{\sqrt\pi}\int_0^{\sqrt{x}}e^{-u^2}\,du
 \ =\ e^x\operatorname{erf}(\sqrt{x}).
\end{align*}
Differentiating gives the closed form
\begin{equation}\label{eq:half-exp-erf}
\displaystyle
 \Dhalf(e^x)\ =\ e^x\operatorname{erf}(\sqrt{x})
                +\frac1{\sqrt{\pi x}}.
\end{equation}
Thus, a question about half-derivatives leads us back to the Gaussian function from Section~\ref{sec:gamma}. Applying the half-derivative once more gives $e^x$, as the composition property suggests for this smooth function.

\begin{remark}[Combinatorial interpretation of the double factorial]
The double factorial $(2n-1)!!$ counts the number of ways to partition $\{1,2,\ldots,2n\}$ into $n$ unordered pairs (a perfect matching on $2n$ objects).  This follows from the following. Label person $1$; she must be paired with one of the remaining $2n-1$ people, giving $2n-1$ choices, and the remaining $2n-2$ people form the next subproblem.  Thus the count satisfies the recursion $a_{2n} = (2n-1)\,a_{2n-2}$ with $a_2 = 1$, giving $a_{2n} = (2n-1)!! $.

This count also equals the even moments of the standard normal: if $X \sim N(0,1)$, then $E[X^{2n}] = (2n-1)!!$.
\end{remark}

\subsection{Half-derivative of $\sin x$}\label{Subsec: sin}

Trigonometric functions provide another natural setting
in which to explore fractional differentiation. 
Recall that the ordinary derivatives of $\sin x$ and
$\cos x$ are closely related. Is there a similar
relationship between their half-derivatives?
We invite the reader to explore this question
in Subsection \ref{questions}.
Here we focus
on the half-derivative of $\sin x$, beginning with its familiar
Taylor series:
\begin{equation}\label{Eq: sineTaylor}
\sin x \ = \ \sum_{n=0}^{\infty} \frac{(-1)^n x^{2n+1}}{(2n+1)!}.
\end{equation}
As with $e^x$, a term-by-term application of $\Dhalf$ to \eqref{Eq: sineTaylor} gives
\begin{equation}\label{Eq: useLinearity}
\Dhalf(\sin x) \ = \ \sum_{n=0}^{\infty} \frac{(-1)^n}{(2n+1)!} \Dhalf(x^{2n+1}).
\end{equation}
Next, we use the half-derivative formula for monomials \eqref{eq:halfderiv}, but for odd powers of the form $x^{2n+1}$:
\begin{equation}\label{Eq: MonomialOdd}
\Dhalf(x^{2n+1}) \ =\ \frac{\Gamma(2n+2)}{\Gamma(2n+3/2)} x^{2n+1/2} \ = \ \frac{(2n+1)!}{\Gamma(2n+3/2)} x^{2n+1/2}.
\end{equation}
Substitute \eqref{Eq: MonomialOdd} into \eqref{Eq: useLinearity}:

\begin{align*}
\Dhalf(\sin x) &\ = \ \sum_{n=0}^{\infty} \frac{(-1)^n}{(2n+1)!} \cdot \frac{(2n+1)!}{\Gamma(2n+3/2)} x^{2n+1/2} \\
&\ = \ \sum_{n=0}^{\infty} \frac{(-1)^n}{\Gamma(2n+3/2)} x^{2n+1/2} \\
&\ = \ x^{1/2} \sum_{n=0}^{\infty} \frac{(-1)^n}{\Gamma(2n+3/2)}x^{2n}
\end{align*}
From Appendix \ref{app:gamma}, we have that $\Gamma(2n+3/2) = \frac{(4n+1)!!}{2^{2n+1}}\sqrt{\pi}$, so

\begin{align*}
\Dhalf(\sin x) &\ =\ x^{1/2} \sum_{n=0}^{\infty} \frac{(-1)^n \cdot 2^{2n+1}}{(4n+1)!! \sqrt{\pi}} x^{2n}\nonumber \\
&\ = \ \frac{2x^{1/2}}{\sqrt{\pi}} \sum_{n=0}^{\infty} \frac{(-1)^n (2x)^{2n}}{(4n+1)!!}.
\end{align*}


\section{Summary and open questions}
\label{sec:summary}

\subsection{What we have learned}

Our starting point was a simple question: Can we construct a linear operator $\Dhalf$ that, when applied twice, recovers the ordinary derivative? If so, to what extent is such an operator uniquely determined? Guided by these questions, we obtained the following observations.

\begin{enumerate}
  \item Assuming that the half-derivative of a power function takes the form
  \[
    \Dhalf(x^n)\ =\ A(n)x^{B(n)},
  \]
  we are naturally led to the choice $B(n)=n-\frac12$, corresponding to reducing the exponent by one-half.

  \item With this choice, the composition property leads
  to the functional equation
  \[
    A(n)A\left(n-\frac12\right)\ =\ n
  \]
  for positive integers $n$.

  \item Seeking a solution of the form
  \[
    A(n)\ =\ \frac{f(n)}{f(n-\frac12)}
  \]
  reduces the functional equation to
  \[
    f(n)\ =\ n f(n-1),
  \]
  the familiar factorial recurrence.

  \item Choosing the Gamma function extends the factorial recurrence to half-integers and supplies a natural coefficient; neither the recurrence nor the composition property forces a unique choice.

  \item The identity
  \[
    \Gamma\left(\frac12\right)\ =\ \sqrt{\pi},
  \]
  obtained from the Gaussian integral, allows us to evaluate the Gamma function at positive half-integers
  and reveals a connection with double factorials.

  \item These considerations lead to the classical formula
  \[
    \Dhalf(x^n)
    \ =\
    \frac{\Gamma(n+1)}{\Gamma(n+\frac12)}
    x^{n-\frac12}
  \]
  for non-negative integers $n$, and an elementary integral formula verifies this choice as the Riemann-Liouville half-derivative. We applied power series to derive formulas for the half-derivative of other common functions.
\end{enumerate}

Along the way, we encountered connections among fractional derivatives, the Gamma function, double factorials, combinatorics, the Gaussian distribution, and the error function. Our exploratory axioms do not uniquely define fractional differentiation. Rather, they motivate a natural construction, and an
elementary integral calculation verifies that this construction agrees with the classical Riemann--Liouville
half-derivative.

\subsection{Questions for further exploration}\label{questions}

The ideas developed in this paper suggest several questions that readers may wish to investigate further, and these can serve as starting points for investigative projects in upper-division undergraduate courses.

\begin{enumerate}

  \item \textit{Fractional differentiation of power series.}
  Suppose
  \[
    f(x)\ =\ \sum_{n=0}^{\infty}a_nx^n.
  \]
  Can we define $\Dhalf f$ by applying the half-derivative formula to each monomial? Under what convergence conditions does the resulting series define a function? For which functions does the composition property $\Dhalf(\Dhalf f) =f'$ hold?

  \item \textit{Half-derivatives of trigonometric functions.} Using Euler's identity
  \[
    e^{ix}\ =\ \cos x+i\sin x
  \]
  and the power-series approach to fractional differentiation, investigate $\Dhalf(\sin x)$ and $\Dhalf(\cos x)$. Is there a relationship analogous to the familiar identities
  \[
    (\sin x)'\ =\ \cos x,
    \qquad
    (\cos x)'\ =\ -\sin x?
  \]
  Subsection \ref{Subsec: sin} gives a starting point for a possible way to use series expansions to derive such formulas.

  \item \textit{The fractional product rule.} Verify directly that the ordinary product rule does not extend to half-derivatives by considering $\Dhalf(x\cdot x^2)$. How does the generalized fractional Leibniz rule account for the discrepancy? Under what conditions does its infinite series converge? Does the familiar product rule
\[
\Dhalf(af) \ = \ \Dhalf(a)f+a\Dhalf(f)
\]
hold when $a$ is a constant? If so, how can this
be reconciled with linearity,
$\Dhalf(af)=a\Dhalf(f)$?
What does the answer tell us about $\Dhalf(a)$?

  \item \textit{Derivatives of irrational orders.} Using Formula~\eqref{eq:fractional-general}, extend the power-function rule to suitable real exponents and investigate whether
  \[
    \mathscr{D}^{\sqrt{2}}
    \left(\mathscr{D}^{2-\sqrt{2}}(x^n)\right)
    \ =\ 
    \mathscr{D}^{2}(x^n)
    \ =\ 
    n(n-1)x^{n-2}
  \]
  for integers $n\geq 2$. What restrictions are necessary when composing fractional derivatives of arbitrary orders?

  \item \textit{Non-uniqueness of fractional derivatives.} Our construction was motivated by linearity and the composition property, but these conditions do not uniquely determine $\Dhalf$.  Can one construct another operator satisfying the same requirements? What additional properties might distinguish
  a canonical choice?

  \item \textit{Alternative definitions.}
  The Riemann-Liouville and Caputo fractional derivatives provide two standard approaches to fractional differentiation (see \cite{feng2021} for a summary of these notions  of fractional derivative).
  How do their formulas compare with our construction on monomials? In particular, how do they treat constant functions, and why do they differ?

    \item \textit{Combinatorics.} The technique of \textit{differentiating identities} is well-established with classical derivatives: one takes a binomial coefficient identity and then computes derivatives of both sides of the identity to generate new identities. One can try using this same strategy with fractional derivatives.

  \item \textit{Other applications.}
  Fractional derivatives arise in models of anomalous diffusion, viscoelasticity, and signal processing. Find an example of a physical phenomenon described by a fractional differential equation. What role does the order of differentiation play in the model? It can help to think about what happens to solutions of the equation as the order of differentiation changes, or even as it approaches integer values!

\end{enumerate}

\newpage

\appendix
\section{The Gamma function: selected values and identities}
\label{app:gamma}

\begin{center}
\begin{tabular}{cc}
\toprule
$s$ & $\Gamma(s)$ \\
\midrule
$1$ & $1$ \\
$2$ & $1$ \\
$3$ & $2$ \\
$4$ & $6$ \\
$n$ & $(n-1)!$ \\
\midrule
$1/2$ & $\sqrt{\pi}$ \\
$3/2$ & $\sqrt{\pi}/2$ \\
$5/2$ & $3\sqrt{\pi}/4$ \\
$7/2$ & $15\sqrt{\pi}/8$ \\
$(2n+1)/2$ & $\dfrac{(2n-1)!!}{2^n}\sqrt{\pi}$ \\
\midrule
$0$ & $\text{simple pole}$ \\
$-1/2$ & $-2\sqrt{\pi}$ \\
$-1$ & $\text{simple pole}$ \\
\bottomrule
\end{tabular}
\end{center}

\bigskip
\noindent Key identities (the reflection formula is included for reference, without proof):
\begin{align*}
  \Gamma(s+1) &\ =\ s\,\Gamma(s) && \text{(functional equation)} \\
  \Gamma(n+1) &\ =\  n! && (n \in \mathbb{Z}_{\geq 0}) \\
  \Gamma\!\left(\tfrac{1}{2}\right) &\ =\  \sqrt{\pi} \\
  \Gamma(s)\,\Gamma(1-s) &\ =\  \frac{\pi}{\sin(\pi s)} && \text{(Euler, when }s\notin\mathbb Z\text{)} \\
  \Gamma\!\left(\tfrac{2n+1}{2}\right) &\ =\ \frac{(2n-1)!!}{2^n}\sqrt{\pi}.
\end{align*}

\section{Technical note on interchanging \texorpdfstring{$\Dhalf$}{D\^{}1/2} and infinite sums}
\label{app:interchange}

Throughout Subsection~\ref{sec:exponential}, we applied the half-derivative term by term to the Taylor series of $e^x$. Convergence of the resulting series alone would not justify doing so. Here is a direct argument using only elementary properties of power series.

Fix $R>0$. On $0\leq t\leq R$, the Taylor series
\[
 e^t\ =\ \sum_{n=0}^\infty\frac{t^n}{n!}
\]
converges uniformly. The kernel $(x-t)^{-1/2}$ is integrable for $0\leq t\leq x$, so we may insert the series into the half-integral \eqref{eq:half-integral} and integrate term by term. The calculation in Theorem~\ref{thm:halfderiv} yields
\begin{align*}
 I^{1/2}(e^x)
 &\ =\ \sum_{n=0}^\infty\frac1{n!}I^{1/2}(x^n)\\
 &\ =\ \sum_{n=0}^\infty\frac{x^{n+1/2}}{\Gamma(n+3/2)}.
\end{align*}
For $x>0$, this last series can be differentiated term by term. To see why, factor out $\sqrt{x}$: the remaining power series has terms $x^n/\Gamma(n+3/2)$, and the ratio of successive terms is
\[
 \frac{x}{n+3/2}\ \longrightarrow\ 0.
\]
Both that series and its ordinary derivative therefore converge uniformly by the ratio test on bounded intervals. Differentiating $\sqrt{x}$ times the power series is legitimate on any interval bounded away from $0$, and gives
\[
 \Dhalf(e^x)
 \ =\ \sum_{n=0}^\infty\frac{x^{n-1/2}}{\Gamma(n+1/2)},
\]
which is exactly \eqref{eq:halfderivexp}. No advanced function-space theory is needed for this example. General results about fractional derivatives of infinite sums can be found in \cite{OldhamSpanier,Samko}.

\section*{Acknowledgment of AI Assistance}
Claude \cite{Claude4.5, Anthropic} and ChatGPT \cite{OpenAI} assisted in compiling and organizing material from two recorded lectures and slides by Steven J. Miller and in finding references and checking the exposition of the manuscript. The authors subsequently reviewed and revised the AI-assisted draft. The goals and principal mathematical ideas of the paper were conceived by the authors, who take full responsibility for the mathematical correctness and final content of the manuscript.

The original draft was generated by entering the following prompt into \cite{Anthropic}:
\begin{quote}
    Can you convert the lectures here to a paper? Put in as much  
    detail as you can, the target audience are undergraduate real 
    analysis students and their professors. I have taken the 
    transcript from the lectures and posted here.  I'm attaching 
    the slides from the lectures; it is lecture 6 (starting on 
    page 94) and lecture 7 (ending on page 114).  Write in LaTeX,
    save it and the compiled PDF as HalfDerivMillerLecture\_Claude\_ver01.
    \end{quote}
The attachments refer to a PDF of the aforementioned lecture notes originally and organically written by Steven J. Miller.

\end{document}